\numberwithin{equation}{section}
\newtheorem{theorem}{Theorem}[section]
\newtheorem{lemma}[theorem]{Lemma}
\theoremstyle{definition}
\newtheorem{definition}[theorem]{Definition}
\newtheorem{remark}[theorem]{Remark}
\title{Deformations of dotted graphs consisting of standard circles}
\author{Inasa Nakamura}
\address{Department of Mathematics, Information Science and  Engineering, \newline
Saga University, \newline 
1 Honjomachi, Saga, 840-1153, Japan}
\email{inasa@cc.saga-u.ac.jp}
\subjclass[2020]{Primary: 05C10, Secondary: 57K10}
\keywords{graph; deformation; lattice polytope; partial matching}
\begin{document}  
\begin{abstract}
Dotted graphs are certain finite graphs with vertices of degree 2  called dots in the $xy$-plane $\mathbb{R}^2$, and a dotted graph is said to be admissible if it is associated with a lattice polytope in $\mathbb{R}^2$ each of whose edge is parallel to the $x$-axis or the $y$-axis.  
A dotted graph is said to be reducible if certain types of  deformations are applicable. In this paper, we investigate the reducibility of admissible dotted graphs in certain simple forms  consisting of standard circles.  
\end{abstract}
\maketitle

\section{Introduction}\label{sec1}

Let $\mathbb{R}^2$ be the $xy$-plane. A finite graph $\Gamma$ in $\mathbb{R}^2$ is a {\it dotted graph} \cite{N2} if each edge has an orientation and each vertex is of degree 2 or degree 4, satisfying that around each vertex of degree 2 (respectively 4), the edges (respectively, each pair of diagonal edges) have a coherent orientation. We denote each vertex of degree 2 by a small black disk, called a {\it dot}. 

The notion of a dotted graph was introduced to investigate lattice polytopes \cite{N}. We treat lattice polytopes as in Figure \ref{Fig2}(1); see \cite{N} for precise definition. 
 In order to treat admissible dotted graphs, it suffices to define  lattice polytopes in terms of graphs \cite{Diestel}, that is a special version of lattice polytopes in \cite{N}: in this paper, we give definition as follows. 
 A {\it lattice polytope} 
 is a dotted graph equipped with vertices of degree 2 called {\it $X$ marks} on edges avoiding dots and crossings such that as immersed oriented circles, dots and $X$ marks appear alternately and the oriented segment from each dot and to the next $X$ mark (respectively, from each $X$ mark to the next dot) is parallel to the $x$-axis (respectively, the $y$-axis). 
For a lattice polytope $P$, when we delete $X$ marks and consider $P$ as a planar graph up to isotopies, we obtain a dotted graph, which will be called the {\it dotted graph associated with a lattice polytope $P$}; see Figure \ref{Fig2}.
We say that a dotted graph $\Gamma$ is {\it admissible} if there exists a lattice polytope $P$ with which $\Gamma$ is associated. 

\begin{figure}[ht]
\centering
\includegraphics*[height=4cm]{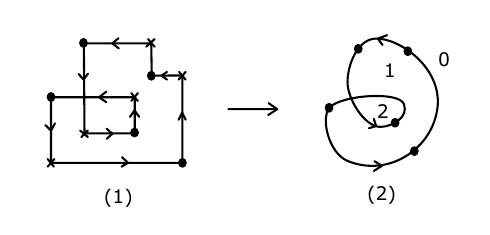}
\caption{(1) A lattice polytope and (2) the associated dotted graph. The $x$-direction is the vertical direction. } 
\label{Fig2}
\end{figure}

In \cite{N}, we investigated partial matchings, using lattice polytopes. 
A partial matching is a finite graph consisting of edges and vertices of degree zero or one, with a finite vertex set; partial matchings are used to investigate structures of polymers such as RNA secondary structures; see \cite{Reidys}. 
In \cite{N}, with a motivation to investigate partial matchings, we introduced the lattice presentation of a partial matching, and the lattice polytope associated with a pair of partial matchings. Further, in \cite{N}, we treated transformations of lattice presentations and lattice polytopes, and the area of a transformation, and the reduced graph of a lattice polytope; see \cite{N} for details. 
In \cite{N2}, we refined the notion of a reduced graph by introducing the notion of a dotted graph, and we introduced the notion of \lq\lq deformations'', in particular, \lq\lq good deformations'' of dotted graphs. These notions were introduced with the aim to establish  tools similar to knot diagrams \cite{Kawauchi} or chart description of surface braids \cite{Kamada}. 
A lattice polytope is associated with an admissible dotted graph. A dotted graph is said to be reducible if a deformation is applicable. 
 In \cite{N2}, we investigated deformations of dotted graphs, and 
 relation between deformations of admissible dotted graphs and transformations of lattice polytopes, using reduced graphs of dotted graphs. 
 
In this paper, with a motivation to investigate forms of reduced graphs,  
we investigate the reducibility of admissible dotted graphs in certain simple forms. We consider dotted graphs given as certain  combinations of components each of which is in the form of a unit circle (standard circle). Using combinatorial arguments, we show that admissible dotted graphs in such forms are reducible. 

We call a subgraph $C$ of a dotted graph $\Gamma$ a {\it circle component} if, regarding $\Gamma$ as immersed circles, $C$ is an  embedded circle. 
For a circle component $C$, we also denote by $C$ the disk bounded by $C$. 
In particular, 
for circle components $C$ and $C'$, we denote by $C \cap C'$ 
the intersection of the disks $C$ and $C'$, or more precisely, the disk $D \cap D'$, 
where $D$ (respectively $D'$) is the disk bounded by $C$ (respectively $C'$).  

\begin{theorem}\label{prop4-1}
Let $\Gamma$ be an admissible dotted graph such that $\Gamma$ consists of a pair of circle components $C_1$ and $C_2$ and the intersection of the bounded disks $C_1$ and $C_2$ forms a disk that is properly included in each bounded disk $C_i$ $(i=1,2)$. 
Then, $\Gamma$ is reducible by good deformations. The dotted graph $\Gamma$ has a good reduced graph that is the empty graph. 
\end{theorem}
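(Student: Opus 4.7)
The plan is to analyze the combinatorial structure of $\Gamma$ that is forced by the hypotheses, enumerate the finitely many admissible configurations up to symmetry, and in each case exhibit an explicit sequence of good deformations that reduces $\Gamma$ to the empty graph.

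First I would pin down the structure. Since each $C_i$ is a standard circle, it corresponds in the associated lattice polytope $P$ to a unit square, and therefore $C_i$ carries exactly two dots, sitting at two opposite corners of that square (the remaining two corners receive $X$ marks). The hypothesis that the disk intersection $C_1 \cap C_2$ is a disk properly included in each bounded disk forces the two unit squares to overlap along a smaller rectangle; of the four corners of this overlap rectangle, two are transverse crossings of $C_1$ and $C_2$, while the other two lie in the interior of one disk and are a corner of the other unit square. Consequently $\Gamma$ has exactly two crossings (degree-$4$ vertices) and four dots, and the relative placement of $C_1$ and $C_2$ up to rigid planar symmetry is tightly constrained.

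Next I would enumerate the essentially distinct cases by recording the positions of the two dots of each $C_i$ relative to the overlap rectangle and the two crossings. The admissibility condition (alternation of dots and $X$ marks along each circle together with the parallelism rule relating consecutive marks to the $x$- and $y$-axes) and the choice of orientation of each $C_i$ rule out most a priori possibilities, leaving only a short list of configurations. For each surviving configuration I would exhibit a good deformation in the sense of \cite{N2} that either resolves the two crossings simultaneously (replacing $\Gamma$ by a single immersed circle, a pair of disjoint standard circles, or a standard circle containing another) or cancels a dot adjacent to a crossing. In each case the resulting graph is a strictly simpler admissible dotted graph, whose good reducibility to the empty graph is either immediate, or follows by a short iteration of the same line of argument, or can be invoked from \cite{N2}.

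The main obstacle I anticipate is twofold. First, after each move one must verify that the intermediate dotted graph is still admissible so that the next good deformation applies; this typically requires producing an associated lattice polytope explicitly or invoking an intrinsic admissibility criterion. Second, one must check that the moves employed are \emph{good} deformations, not merely arbitrary deformations, which is the more delicate property. To keep the argument compact, the initial deformation in each configuration should be chosen economically so that the case analysis does not fragment into many sub-cases before termination at the empty graph.
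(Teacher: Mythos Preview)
Your proposal rests on an unwarranted reading of ``standard circle''. In this paper a circle component is simply an embedded circle among the immersed circles determined by $\Gamma$; it is not assumed to correspond to a unit square in the associated lattice polytope, and nothing forces it to carry exactly two dots. Lemma~\ref{lem4-2} guarantees only that each circle component has \emph{at least} two dots. Consequently your ``finitely many admissible configurations up to symmetry'' do not exist: each of the four arcs $a_1,\ldots,a_4$ may carry arbitrarily many dots, and the overlap region in $P$ need not be a rectangle with the corner structure you describe. Your enumeration therefore covers only a very special case of the theorem.

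The paper's argument avoids this difficulty by working not with the lattice polytope geometry but with the labels (rotation numbers) of the regions of $\Gamma$. The decisive case split is whether $C_1$ and $C_2$ carry the same label $\epsilon$ or opposite labels. In the same-label case deformation~II already applies. In the opposite-label case the bigon $C_1\cap C_2$ has label $0$, and one argues by contradiction: assuming no good deformation applies, Lemma~\ref{lem4-3} (the bigon has a dot) and Lemma~\ref{lem4-2} (circle and outermost components have dots) together force dots onto enough of the arcs $a_1,\ldots,a_4$ that a deformation~IVa becomes unavoidable. Your concern about preserving admissibility under the moves is also misplaced: reducibility only asks for a single applicable good deformation on $\Gamma$ itself, and for the claim about the good reduced graph the short sequence IVa, III, II terminates at the empty graph without any appeal to admissibility of the intermediate dotted graphs.
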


\begin{theorem}\label{prop4-4}
Let $\Gamma$ be an admissible dotted graph such that $\Gamma$ consists of a finite number of circle components $C_1, \ldots, C_n$ and the bounding disks, also denoted by $C_1, \ldots, C_n$, satisfy  that $C_i \cap C_j$ is a disk  properly included in  each disk $C_i$ and  $C_j$ for $|i-j|=1$, and $C_i \cap C_j =\emptyset$ for $|i-j|>1$. We also include the case when $C_{1}=C_{n+1}$. 
Then, $\Gamma$ is reducible by good deformations. 
\end{theorem}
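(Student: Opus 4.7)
The plan is to proceed by induction on the number $n$ of circle components, with Theorem~\ref{prop4-1} serving as the base case $n=2$ of the non-cyclic situation.

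First I would handle the chain case (when $C_1\neq C_{n+1}$). For $n\geq 3$, I would focus on the terminal pair $C_{n-1}\cup C_n$. Because $C_n\cap C_i=\emptyset$ for $i<n-1$, and in particular $C_{n-2}\cap C_n=\emptyset$, the overlap region $C_{n-1}\cap C_n$ lies in a subdisk of $C_{n-1}$ that is disjoint from the overlap region $C_{n-2}\cap C_{n-1}$. This spatial separation permits one to view $C_{n-1}\cup C_n$ as a locally isolated copy of the two-circle configuration of Theorem~\ref{prop4-1}. I would apply the good deformations supplied by that theorem inside a neighborhood of $C_{n-1}\cap C_n$ chosen to avoid $C_{n-2}$. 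The modified graph is a chain with strictly fewer circle components, which by the inductive hypothesis is reducible by good deformations.

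For the cyclic case $C_{n+1}=C_1$, I would first apply a good deformation at one consecutive pair to break the cyclic structure into a chain, and then invoke the chain argument above. The small-$n$ base cases for the cycle require a separate but direct analysis, again based on Theorem~\ref{prop4-1}-style moves in well-chosen local regions.

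The main obstacle I anticipate is the rigorous localization of the deformations. To invoke Theorem~\ref{prop4-1} on the subconfiguration $C_{n-1}\cup C_n$, one must verify: (i) that the admissibility of $\Gamma$ restricts to an admissibility-type property for this subgraph, i.e., that the lattice polytope $P$ associated with $\Gamma$ restricts well to the relevant subregion; and (ii) that the moves from Theorem~\ref{prop4-1} can be realized inside a small disk avoiding $C_{n-2}$, leaving the remainder of the lattice polytope (with its dots, $X$ marks, and orientations) unchanged. Tracking these decorations through the local moves will be the main technical burden.
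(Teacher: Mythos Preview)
Your inductive approach has a genuine gap at the localization step, and the obstacle you flag in (ii) is in fact fatal rather than merely technical.  The good deformations in Theorem~\ref{prop4-1} are \emph{not} supported near the bigon $C_{n-1}\cap C_n$.  In Case~2 of that proof the relevant move is a deformation~IVa applied to the lune $|a_1a_2|=C_1\setminus(C_1\cap C_2)$ or $|a_3a_4|=C_2\setminus(C_1\cap C_2)$, and the argument also uses that the \emph{outermost} boundary of $C_1\cup C_2$ is a crossing-including component carrying dots (Lemma~\ref{lem4-2}).  When you try to run this on $C_{n-1}\cup C_n$ inside $\Gamma$, the lune $C_{n-1}\setminus(C_{n-1}\cap C_n)$ is not a single region with label $\pm1$: it is cut by $C_{n-2}$, the labels change across that cut, and the ``outermost component of $C_{n-1}\cup C_n$'' no longer exists as such because $C_{n-1}$ has two extra crossings with $C_{n-2}$.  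So Theorem~\ref{prop4-1} simply does not apply to the subconfiguration, and there is no disk avoiding $C_{n-2}$ in which its deformations live.  Your point (i) is also problematic: restricting the lattice polytope $P$ to a subregion does not yield a lattice polytope for $C_{n-1}\cup C_n$ alone, since the arcs of $C_{n-1}$ carry dots and $X$~marks dictated by the full $P$, not by any two-circle polytope.

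The paper's proof is not inductive at all.  It argues by contradiction globally: assuming $\Gamma$ is irreducible, it records for each bigon $C_i\cap C_{i+1}$ its label (in $\{0,\pm2\}$) and, using Lemmas~\ref{lem4-3} and~\ref{lem4-2}, a symbol $l,r,b,e$ indicating which of its two arcs must carry a dot.  Irreducibility forbids certain adjacent patterns (no $(2,2)$, no $(r,l)$ on consecutive $0$-bigons, etc.), which forces the full sequence into one of a few explicit shapes; each of these is then shown to admit a deformation~IVa by locating a dot on the outermost component via Lemma~\ref{lem4-2}.  The argument is a single combinatorial sieve on the whole chain or ring, with no reduction to fewer circles.
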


\begin{theorem}\label{prop4-5}
Let $\Gamma$ be an admissible dotted graph such that $\Gamma$ consists of three circle components $C_1, C_2, C_3$ such that in the sense of bounded disks, the intersection  $C_i \cap C_j$ $(i\neq j \in \{1,2,3\})$ of each pair of disks is a disk properly included in each disk $C_i$ and $C_j$, and the intersection $C_1 \cap C_2 \cap C_3$ of the disks is a disk. 
Then, $\Gamma$ is reducible by good deformations. 
\end{theorem}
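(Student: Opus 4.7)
The plan is to reduce $\Gamma$ to a configuration already handled, namely the two-circle case of Theorem \ref{prop4-1} or the case $n=3$ of Theorem \ref{prop4-4}, by performing a good deformation located at the outermost part of the Venn-type configuration formed by $C_1, C_2, C_3$.

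First I would use admissibility to describe the discrete structure of each circle. As an embedded circle of an admissible dotted graph, each $C_i$ can be realized as a rectilinear closed curve whose dots alternate with $X$ marks and whose segments are alternately parallel to the $x$- and $y$-axes; in particular $C_i$ carries an even number of dots. Since $C_1 \cap C_2 \cap C_3$ is a genuine disk and every pairwise intersection is a proper disk, the three rectilinear polygons lie in Venn position with exactly six degree-$4$ vertices, arranged hexagonally around the boundary of the triple-intersection disk.

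Next I would focus on the three outer arcs, one per $C_i$, contained in $C_i \setminus (C_j \cup C_k)$, and look for a good deformation at an extremal axis-aligned edge of $C_1 \cup C_2 \cup C_3$ (for instance its topmost horizontal edge). Such an edge belongs to a unique $C_i$, its endpoints are adjacent dots on $C_i$, and no other circle meets a small neighborhood. From the local picture one should be able to read off a good deformation that either decreases the dot count on $C_i$, shrinks one of the pairwise intersections, or eliminates the triple intersection altogether. Iterating this reduction yields either $C_1 \cap C_2 \cap C_3 = \emptyset$ (in which case Theorem \ref{prop4-4} with $n=3$ applies) or a two-circle subconfiguration (in which case Theorem \ref{prop4-1} applies).

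The main obstacle I anticipate is verifying that this extremal-edge move genuinely qualifies as a \emph{good} deformation in the sense of \cite{N2}, rather than being a purely topological simplification. Establishing this requires checking the orientation-coherence and admissibility-preservation conditions in the definition of good deformation, and will likely call for a case analysis on the parity and direction of the axis-aligned segments adjacent to the extremal edge, as well as on how the nearest dots of $C_j$ and $C_k$ sit relative to the two crossings bordering the chosen outer arc. Once these local checks are carried out, the extremal-edge reduction combined with the two preceding theorems should complete the argument.
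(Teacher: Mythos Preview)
Your proposal has a genuine gap: the core step --- that an ``extremal axis-aligned edge'' of a lattice-polytope realization yields one of the good deformations I, II, III, IVa on the dotted graph --- is never established, and you yourself flag it as the main obstacle. But this is not a technicality to be checked at the end; it is essentially the whole content of the theorem. The deformations I--IV are specific combinatorial moves on the dotted graph (Figure~\ref{Fig3}), and an extremal horizontal edge of the polytope does not by itself produce any of them: deformation II needs an innermost $\epsilon$-region, and deformation IVa needs a pair of dots on adjacent arcs at a crossing bounding a suitable region. Nothing in your outline locates such a pair or such a region. Your iteration plan is also both unnecessary and unjustified: the theorem only asserts that \emph{one} good deformation applies, and after applying one the graph need not remain in Venn position, so reducing to Theorems~\ref{prop4-1} or~\ref{prop4-4} is not available.

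The paper's argument is quite different in structure. It assumes $\Gamma$ is not reducible and derives a contradiction by a direct case analysis on the labels $\pm 1$ of $C_1,C_2,C_3$ and on which of the named arcs $a_1,\ldots,a_8$ carry dots. The two lemmas of Section~\ref{sec-lem} --- that a non-coherently-oriented bigon carries a dot (Lemma~\ref{lem4-3}) and that a crossing-including (in particular, outermost) component carries at least two dots (Lemma~\ref{lem4-2}) --- are invoked repeatedly to force dots onto specific arcs, and in each branch two such dots end up on adjacent arcs of a crossing, enabling a deformation IVa. Your extremal-edge heuristic is closer in spirit to the \emph{proof} of Lemma~\ref{lem4-2} (which does use the rectilinear geometry of the polytope) than to the proof of the theorem itself; the theorem then uses that lemma as a black box inside a purely combinatorial dot-chasing argument.
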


The paper is organized as follows. 
In Section \ref{sec2}, we review dotted graphs and their deformations. 
In Section \ref{sec-lem}, as preliminaries, we show two lemmas. 
In Section \ref{sec3}, we show 
Theorems \ref{prop4-1}, \ref{prop4-4} and \ref{prop4-5}.

\section{Dotted graphs and their deformations}\label{sec2}
In this section, we review the notion of a dotted graph and  deformations of dotted graphs \cite{N2}. 
 
\begin{definition}[\cite{N2}]\label{def2-1}
Let $\Gamma$ be a finite graph in $\mathbb{R}^2$. Then $\Gamma$ is a {\it dotted graph} if each edge has an orientation and each vertex is of degree 2 or degree 4, satisfying the following conditions. 
\begin{enumerate}
\item
Around each vertex of degree 2, the edges have a coherent orientation. We call the vertex a {\it dot}, and we denote it by a small black disk.  

\item
Around each vertex of degree 4, each pair of diagonal edges has a coherent orientation. We call the vertex a {\it crossing}.

\end{enumerate}

We regard edges connected by vertices of degree 2 as an edge with several dots. We call an edge or a part of an edge an {\it arc}. 
We regard $\Gamma$ as immersed oriented circles, and we assign each region determined by $\Gamma$ its rotation number. 
Here, for an immersion $f: S^1_1 \sqcup S^1_2 \sqcup \ldots \sqcup S^1_m \to \mathbb{R}^2$ $(S^1_i=S^1)$ and a region $R \subset \mathbb{R}^2$ determined by the image of $f$, the {\it rotation number} of $f$ with respect to $R$ is the sum of rotation numbers of $g_i: S^1_i \to \mathbb{R}/2 \pi \mathbb{Z}$ $(i=1, \ldots,m)$ that maps $x \in S^1_i$ to the argument of the vector from a fixed interior point of $R$ to $f(x)$. Here, the {\it rotation number} of a map $g: S^1=\mathbb{R}/2\pi \mathbb{Z} \to \mathbb{R}/2 \pi \mathbb{Z}$ is defined by 
$(G(x)-x)/2\pi$, where $G: \mathbb{R} \to \mathbb{R}$ is the lift of $g$ and $x \in S^1$.
Two dotted graphs are {\it equivalent} if they are related by an ambient isotopy of $\mathbb{R}^2$. 

\end{definition}

 Let $\Gamma$ be a dotted graph. 
We call a subgraph of $\Gamma$ a {\it circle component} if it forms an embedded circle in the set of immersed circles determined by $\Gamma$. And we call a subgraph of $\Gamma$ a {\it loop component} if it is contained in one of immersed circles determined by $\Gamma$ and it forms a simple closed path whose base point is a crossing (self-intersection point). 
For a dotted graph $\Gamma$ that can be presented by $\Gamma=\Gamma_1 \cup \Gamma_2$, where $\Gamma_1$ and $\Gamma_2$ are dotted graphs, we say that a region $R_1$ of $\Gamma_1$ is an {\it overlapping region} or a region $R_2$ of $\Gamma_2$ is an {\it overlapped region}, if $R_1 \cap R_2 \neq \emptyset$ and the label of the region of $\Gamma_1$ containing $R_2\backslash R_1$ is zero; see \cite{N2}.  
See Figure \ref{20240531-3} for an example of an overlapping region.

Let $\Gamma$ and $\Gamma'$ be dotted graphs. 
Then, 
$\Gamma'$ is obtained from $\Gamma$ by 
a {\it dotted graph deformation} or simply a  {\it deformation} I, II, III or IV if $\Gamma$ is changed to $\Gamma'$ by a local move in a disk as shown in Figure \ref{Fig3}; see \cite[Definition 3.3]{N2} for  precise definition.

\begin{figure}[ht]
\centering
\includegraphics*[height=4.5cm]{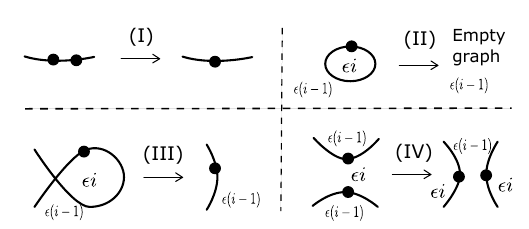}
\caption{Local deformations I--IV, where $\epsilon \in \{+1, -1\}$ and $i$ is a positive integer. A deformation IV is applicable when the arcs admit induced orientations. 
Deformations II, III, IV are applicable including the case when the regions are overlapped by several regions of the label $\epsilon$. }
\label{Fig3}
\end{figure}

Let $\Gamma$ be a dotted graph. We say that $\Gamma$ is  {\it reducible} if we can apply one of deformations I--IV. 
And we call a dotted graph $\Gamma'$ a {\it reduced graph} of 
$\Gamma$ if $\Gamma'$ is not reducible and $\Gamma'$ is obtained from $\Gamma$ by a finite sequence of deformations I--IV. We remark that there exist non-empty reduced graphs; see \cite[Figure 6.2]{N} for an example.

Further, we consider a specific deformation IV, called a {\it deformation IVa}, that satisfies one of the conditions (a1) and (a2) given in \cite[Definition 3.4]{N2}. In this paper, we use only (a1), as follows. We denote by $\mathcal{R}$ a deformation IV: 

\begin{enumerate}

 \item[(a1)]
The arcs involved in $\mathcal{R}$ are adjacent arcs of a crossing, where we ignore overlapping regions, such that $\mathcal{R}$ creates a loop component applicable of a deformation III. 

\end{enumerate}

We call deformations I, II, III and IVa {\it good deformations}, 
and we call a reduced graph of good deformations a {\it good reduced graph}, when we apply a deformation III after each deformation IVa to delete the loop component. 
See Figure \ref{20240531-3} for an example of a deformation IVa. 
See \cite{N2} for results concerning good deformations and transformations of lattice polytopes. 

\begin{figure}[ht]
\centering
\includegraphics*[height=3.5cm]{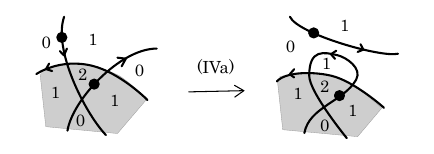}
\caption{Example of a deformation IVa (a1), where the shadowed region in each figure is an overlapping region. }
\label{20240531-3}
\end{figure}

\section{Lemmas}\label{sec-lem}
As preliminaries, we show lemmas. 
We recall that a circle component of a dotted graph is the set of arcs with dots and connecting crossings that bounds a disk such that any pair of arcs connected by a crossing is a pair of diagonal arcs. And we call a set of arcs with dots and connecting crossings a {\it bigon component} or simply a {\it bigon} if it bounds a disk and two pairs of arcs connected by a crossing are pairs of adjacent arcs and 
any other pair of arcs connected by a crossing is a pair of diagonal arcs, and moreover around each crossing connecting the adjacent arcs forming the component, the bounded disk does not contain the other arcs. 

\begin{lemma}\label{lem4-3}
Let $\Gamma$ be an admissible dotted graph. 
Let $C$ be a bigon component such that its forming arcs do not admit coherent orientations as a circle. Then, $C$ has at least one dot. 
\end{lemma}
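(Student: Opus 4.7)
The plan is to argue by contradiction: suppose $C$ is a bigon whose forming arcs $a_1,a_2$ meet at crossings $p$ and $q$ as adjacent arcs, that $a_1$ and $a_2$ do not admit a coherent orientation as a circle, and that $C$ has no dot. I will show this forces $p=q$, contradicting the distinctness of the two corners.

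First, I would lift $\Gamma$ to an associated lattice polytope $P$, so every arc of $\Gamma$ becomes a lattice path whose segments are horizontal or vertical, with a transition from vertical to horizontal at each dot and from horizontal to vertical at each $X$-mark (read in the orientation direction). Since $C$ has no dots, the only transitions allowed along $a_1$ and $a_2$ are from horizontal to vertical, so each of these arcs contains at most one $X$-mark, and none at all if its initial segment is already vertical.

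Next I would observe that at any crossing of $P$---in particular at $p$ and $q$---each passing strand keeps its direction (since no dot or $X$-mark sits at a crossing), and being transverse the two strands are perpendicular: one is horizontal and the other vertical. At interior crossings on $a_1$ or $a_2$ the bigon arc is a diagonal pair by the bigon definition, so the arc preserves its horizontal or vertical character through such crossings. After possibly swapping the labels $a_1\leftrightarrow a_2$ and $p\leftrightarrow q$, we may assume that $a_1$ is horizontal at $p$, that $a_2$ is vertical at $p$, and that the incoherent orientations are both $p\to q$.

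Then $a_2$ leaves $p$ in a vertical direction and cannot undergo any horizontal-to-vertical transition, so it is a single vertical segment arriving at $q$ still in a vertical direction. Perpendicularity at $q$ now forces $a_1$ to be horizontal at $q$; since $a_1$ also leaves $p$ horizontally, the number of transitions on $a_1$ is even, and since at most one transition is allowed the actual number is zero. Hence $a_1$ is a single horizontal segment from $p$ to $q$. But then $p$ and $q$ share their $y$-coordinate (via $a_1$) and their $x$-coordinate (via $a_2$), so $p=q$. The main delicacy is to carry out this transition-counting under the \emph{incoherent} orientation hypothesis: in the coherent case both arcs are forced to have one $X$-mark, producing admissible no-dot bigons in the shape of a rectangle, and it is precisely the mismatch at $p$ induced by incoherence that removes the possible $X$-marks and collapses the bigon.
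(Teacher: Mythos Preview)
Your argument is correct and follows essentially the same route as the paper: both lift the bigon to the associated lattice polytope and exploit the axis-parallel structure (dots as vertical-to-horizontal turns, $X$-marks as horizontal-to-vertical turns). The paper's proof is a one-line appeal to a figure, whereas your transition-counting argument makes the mechanism explicit and in particular explains why the \emph{incoherent} orientation hypothesis is essential---a point the paper leaves to the picture.
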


\begin{proof}
By considering the corresponding lattice polytope, we see that there are at least one dot and X mark on the bigon. Thus $C$ has at least one dot; see Figure \ref{Fig6}. 
\end{proof}

\begin{figure}[ht]
\centering
\includegraphics*[height=2.5cm]{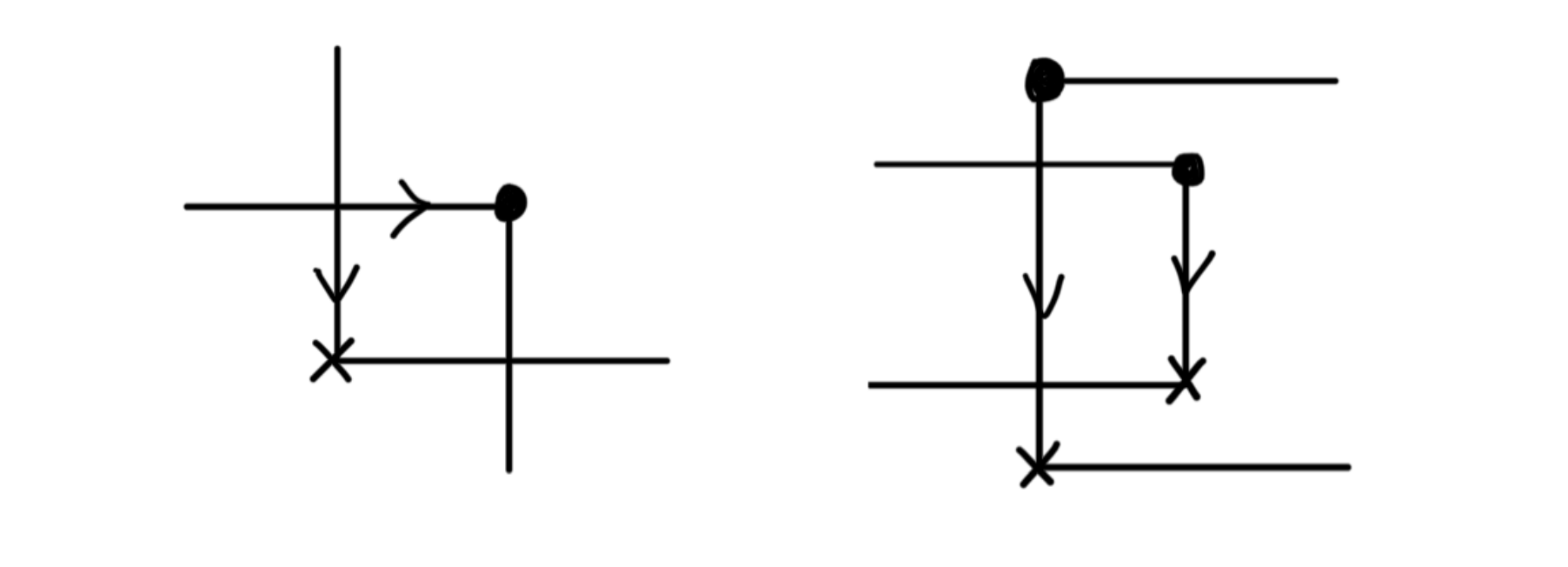}
\caption{The part of a lattice polytope is essentially as this figure, that realizes a bigon component whose arcs do not admit coherent orientations as a circle. }
\label{Fig6}
\end{figure}

Let $\Gamma$ be a dotted graph. 
We call a set $G$ of arcs with dots and connecting crossings a {\it  crossing-including component} if its union is homeomorphic to a circle $C$ and, when we ignore crossings of $G$ connecting diagonal arcs in $G$, around each of the other crossings of $G$, the closed disk bounded by $C$ contains the four arcs forming the crossing. In particular, we call a crossing-including component an {\it outermost component} if it bounds a disk $D$ such that there is a regular neighborhood $N(D)$ of $D$ such that $N(D) \cap \Gamma=D \cap \Gamma$. 
We remark that a loop/bigon component is not a crossing-including component. 

\begin{lemma}\label{lem4-2}
Let $\Gamma$ be an admissible dotted graph. Then, a crossing-including component has at least two dots. 
In particular, a circle component has at least two dots.
\end{lemma}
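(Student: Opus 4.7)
My plan is to work entirely inside the lattice polytope $P$ supplied by admissibility. In $P$, the curve $C$ appears as a simple closed axis-parallel curve in $\mathbb{R}^2$ bounding the disk $D$, whose $90^\circ$ direction changes occur exactly at dots, at X marks, and at those crossings of $G$ where two adjacent arcs of $G$ meet (the \lq\lq other'' crossings in the definition of a crossing-including component, which I shall call type (b)); at a type (a) crossing the two $G$-arcs are collinear, so $C$ passes through without turning.

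The key local observation I would establish first is that every type (b) crossing is a concave corner of $D$. At such a crossing $p$, the two arcs of $G$ on $C$ cut out a $90^\circ$ wedge, while the other two arcs of $\Gamma$ at $p$ emanate into the complementary $270^\circ$ wedge; since the defining condition of a crossing-including component forces those two extra arcs to lie in $\bar{D}$, the disk $D$ must occupy the $270^\circ$ side of $p$. Orient $C$ counter-clockwise, and write $d^+, d^-$ and $x^+, x^-$ for the numbers of dots and X marks at convex/concave corners of $C$, and $k$ for the number of type (b) crossings. Summing the $\pm \pi/2$ contributions at each corner (with all $k$ type (b) corners contributing $-\pi/2$) gives the rotation identity
\[(d^+ - d^-) + (x^+ - x^-) = k + 4.\]
Since $d^+ - d^- \le d$ and $x^+ - x^- \le x$, this yields $d + x \ge k + 4$.

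The second ingredient is the alternation of dots and X marks along every immersed circle of $P$. Since $C$ stays on a single immersed circle through every dot, X mark, and type (a) crossing, the type (b) crossings split $C$ into pieces, each lying on one immersed circle; on each piece dots and X marks alternate, so $|d_j - x_j| \le 1$. Summing with the triangle inequality gives $|d - x| \le k$, hence $d - x \ge -k$.

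Adding $d + x \ge k + 4$ and $d - x \ge -k$ yields $2d \ge 4$, so $d \ge 2$; the circle-component case is the specialization $k = 0$, where $C$ is a single closed loop on one immersed circle. The step that I expect will need the most care is the concavity of type (b) corners, which is where the defining hypothesis on a crossing-including component actually enters; once that is in place, the rest of the argument is a short combination of the rotation formula with the alternation of dots and X marks on immersed circles.
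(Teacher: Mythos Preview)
Your argument is correct and follows the same core strategy as the paper: pass to the lattice polytope, observe that each type~(b) crossing is a reflex corner of the disk $D$ (inner angle $3\pi/2$), and invoke the angle-sum identity $(\text{convex})-(\text{concave})=4$ for a simple closed rectilinear curve. The paper then simply asserts that among the resulting $\ge 4$ convex degree-$2$ corners at least two are dots and at least two are $X$ marks; your alternation bound $|d-x|\le k$, obtained by cutting $C$ at the type~(b) crossings into arcs lying on single immersed circles, is a more explicit route to the same conclusion $d\ge 2$.
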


\begin{proof}
Let $P$ be a lattice polytope associated with $\Gamma$. 
We consider a crossing-including component of $\Gamma$, and 
we denote by $G$ the corresponding subgraph of $P$. Further, we ignore crossings of $G$ connecting diagonal arcs in $G$, regarding the two arcs as one arc. 
The subgraph $G$ consists of edges and crossings (and dots and $X$ marks) such that in a neighborhood of each crossing, the four arcs of $P$ forming the crossing are included in the closed disk bounded by $G$; this implies that the inner angle of $G$ at each crossing is $3\pi/2$. Further, we see that since $G$ bounds a disk, we have $n$ inner angles of $3\pi/2$ and $n+4$ inner angles of $\pi/2$ for some non-negative integer $n$. Hence we have at least 4 inner angles of $\pi/2$ each of which is formed by a pair of edges connected by a vertex of degree 2, and among the vertices at least two are dots and at least two are $X$ marks. Thus, the dotted graph $\Gamma$ has at least two dots. 
\end{proof}

\section{Admissible dotted graphs that are reducible}\label{sec3}

In this section, we show Theorems \ref{prop4-1}, \ref{prop4-4} and  \ref{prop4-5}. 
For a circle component $C$, we also denote by $C$ the union of regions forming the disk bounded by $C$. We regard the disk as a region (overlapped by some regions) and we call the label of the region $C$ the {\it label of the circle component}.  
In particular, 
for circle components $C$ and $C'$, we denote by $C \cap C'$ 
the bigon component or the disk bounded by the bigon component. 
  
For a set $X$ of arcs $a_1, \ldots, a_k$ with dots and connecting crossings, we denote $X$ by $\cup_{i=1}^k a_i$, and when $X$ bounds a disk, we denote the disk by $|a_1 \cdots a_k|$. 
We mostly take as $X$ a circle/bigon component.

\begin{proof}[Proof of Theorem \ref{prop4-1}]
We denote the circle component by $C_1$ and $C_2$, and we denote the arcs by $a_1, \ldots, a_4$ such that $C_1$ consists of $a_1$ and $a_3$, and $C_2$ consists of $a_2$ and $a_4$, and the bigon $C_1 \cap C_2$ consists of $a_2$ and $a_3$, respectively; see Figure \ref{Fig7}. 
We remark that the label of the region $C_1 \cap C_2$ is the total sum of the labels of regions $C_1$ and $C_2$. 

(Case 1) When $C_1$ and $C_2$ both have the label $\epsilon$ $(\epsilon \in \{+1, -1\})$. 
In this case, by deformations II, we can deform $\Gamma$ to the empty graph.  
 
(Case 2) When $C_1$ and $C_2$ have the labels $\epsilon$ and $-\epsilon$, respectively $(\epsilon \in \{+1, -1\})$. 
It suffices to show for the case when $\epsilon=+1$. Then the labels of regions $C_1\backslash (C_1 \cap C_2)$, $C_1 \cap C_2$, $C_2\backslash (C_1 \cap C_2)$ are $+1, 0, -1$, respectively. 
Assume that $\Gamma$ is not reducible. 
By Lemma \ref{lem4-3}, the bigon $C_1 \cap C_2=a_2\cup a_3$ has a dot. 
By symmetry of the graph, we can assume 
that $a_2$ has a dot. Then, since $C_1$ has a dot by Lemma \ref{lem4-2}, either $a_1$ or $a_3$ has a dot. If $a_1$ has a dot, then we can apply a deformation IVa for the region $|a_1a_2|$, which is a contradiction. Hence, $a_3$ has a dot. Then, since the boundary of the closure of the disk $C_1 \cup C_2$ (the outermost component) has a dot by Lemma \ref{lem4-2}, $a_1$ or $a_4$ has a dot. In both cases, we can apply a deformation IVa for the region $|a_1a_2|$ or $|a_3a_4|$, which is a contradiction. Thus, $\Gamma$ is reducible. 
In these cases, by applying a deformation IVa and then deformations III and II, the graph is deformed to the empty graph; thus we see that the empty graph is a good reduced graph of $\Gamma$.
\end{proof}

\begin{figure}[ht]
\centering
\includegraphics*[height=3.5cm]{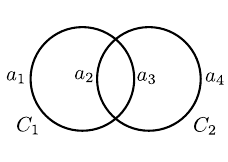}
\caption{Arcs $a_1, a_2, a_3, a_4$ in the proof of Theorem  \ref{prop4-1}. }
\label{Fig7}
\end{figure}

We show Theorem \ref{prop4-4}. 
When $C_1 \cap C_n=\emptyset$, we say that $\Gamma$ is {\it in a straight form}, and when $C_1 \cap C_n \neq \emptyset$ and $C_1=C_{n+1}$, we say that $\Gamma$ is {\it in a ring form}. 

\begin{proof}[Proof of Theorem \ref{prop4-4}]
Assume that $\Gamma$ is not reducible. 
By Theorem \ref{prop4-1}, we see that $n>2$. 

(Step 1) 
First we discuss the case when there exist $C_i$, $C_{i+1}$, and $C_{i+2}$ that have the same label $\epsilon$ $(\epsilon \in \{+1, -1\})$. 
In this case, we can apply a deformation II to $C_{i+1}$. Hence this case does not occur. 

(Step 2) 
Next we discuss the case when there exist $C_i$ and $C_{i+1}$ that have the same label $\epsilon$ $(\epsilon \in \{+1, -1\})$. 
If $\Gamma$ is in a straight form and $C_i=C_1$ or $C_{i+1}=C_n$, we can apply a deformation II and remove $C_i$ or $C_{i+1}$. 
Hence, we see that $C_i$ and $C_{i+1}$ are the middle of a sequence of circle components $C_{i-1}$, $C_i$, $C_{i+1}$, and $C_{i+2}$, where we include the case $C_{i-1}=C_{i+2}$ for the ring form. Further, by the argument in Step 1, we see that $C_{i-1}$ and $C_{i+1}$ have the label $-\epsilon$. 

We denote by $a_1, a_2, a_3, a_4$ the arcs such that in the sense of regions, 
\begin{eqnarray*}
  && C_{i-1} \cap C_{i}=|a_1 a_2|, a_1 \subset  C_i, a_2 \subset C_{i-1}, \\
&& C_{i+1} \cap C_{i+2}=|a_3 a_4|, a_3 \subset  C_{i+2}, a_4 \subset C_{i+1}. 
\end{eqnarray*}
By Lemma \ref{lem4-3}, $a_1 \cup a_2$ has a dot. Consider the case when $a_2$ has a dot. Then, dots of the circle component $C_i$ must be in $a_1$, because otherwise we can apply a deformation IVa. 
Since $C_i$ has a dot by Lemma \ref{lem4-2}, the case when $a_2$ has a dot is included in the case when $a_1$ has a dot, so we see that $a_1$ must have a dot. 
Similarly, we see that there is a dot in $a_4$. 

(Step 3)
We consider the sequence of bigons $C_{i} \cap C_{i+1}$ ($i=1,2, \ldots$). We call the arc of $C_i$ (respectively $C_{i+1}$) in $C_{i} \cap C_{i+1}$ the {\it right arc} (respectively {\it left arc}). We denote by a sequence of letters $(s_1, s_2, \ldots)$ the sequence of information of existence of dots in the bigons, called the {\it sequence of dot existence}, obtained as follows. If we are certain that the $j$th bigon has a dot in its left arc (respectively right arc), then we write $s_j=l$ (respectively $r$), and if we are certain that it has dots in both left and right arcs, then we write $s_j=b$, and if we are not certain that the bigon has dots, then we right $e$. We remark that when we write $s_j=l$ or $s_j=r$, the other arc may have dots. In the above situation of $C_{i-1}$, $C_i$, $C_{i+1}$, and $C_{i+2}$ such that $C_i$, $C_{i+1}$ have the label $\epsilon$ and the others have the label $-\epsilon$, the sequence of dot existence is $(l, e, r)$. 

The regions bounded by bigons $C_{i} \cap C_{i+1}$ $(i=1,2,\ldots)$ have labels in $\{0, \pm 2\}$. We consider the sequence of the labels, called the {\it sequence of labels of the bigons}. Since the label of $C_{i} \cap C_{i+1}$ is zero (respectively $\epsilon 2$, where $\epsilon \in \{+1, -1\}$) if the labels of $C_i$ and $C_{i+1}$ are $(1, -1)$ or $(-1, 1)$ (respectively $(\epsilon, \epsilon)$), the sequence of labels of the bigons is a sequence consisting of $0, \pm 2$ that does not contain $(2,-2)$ or $(-2,2)$.  
Further, by the arguments  in Steps 1 and 2, we see that the sequence does not contain $(-2, -2)$ or $(2,2)$, and the sequence begins and ends with $0$ if $\Gamma$ is in the straight form. 
And the sequence of labels of the bigons $(0,\pm 2,0)$ is associated with the sequence of dot existence $(l, e, r)$, and the sequence of labels of the bigons $(0, \epsilon 2,0, \delta 2, 0)$ $(\epsilon, \delta \in \{+1, -1\})$ is associated with the sequence of dot existence $(l, e, b,e,r)$. 

If two adjacent bigons $X_1$ and $X_2$ with the label zero have a dot in the right arc of $X_1$ and a dot in the left arc of $X_2$, then we can apply a deformation IVa. Hence, the sequence of dot existence must not have a subsequence $(r,l)$ or $(b,l)$ or $(r,b)$ associated with a subsequence $(0,0)$ of labels of the bigons. Since a bigon with the label zero must have a dot by Lemma \ref{lem4-3}, we see that 
the possible sequences of dot existence are as follow.  
In the straight form, 
\begin{equation}\label{eq0813-1}
 (l,l,\ldots, l, l,e,b,e,b,e,\ldots, b,e,r,r,\ldots, r) 
\end{equation}
associated with the sequence of labels of bigons 
\begin{equation*}
 (0,0,\ldots, 0, \epsilon_1 2,0,\epsilon_2 2,0,\ldots, \epsilon_m 2, 0, 0,\ldots, 0), 
\end{equation*}
where $\epsilon_1, \ldots, \epsilon_m \in \{+1, -1\}$ for some positive integer $m$, 
or 
\begin{equation}\label{eq0916}
 (l,l,\ldots, l) \text{ or }  (r,r,\ldots, r) 
\end{equation}
associated with the sequence of labels of bigons
\begin{equation*}\label{eq0917}
 (0,0,\ldots, 0).  
\end{equation*}
And in the ring form, 
\begin{equation}\label{eq0813-2}
 (b,e,b,e, \ldots, b,e) \text{ or } (e,b,e,b, \ldots, e,b)
\end{equation}
 associated with the sequence of labels of bigons  
\begin{equation*}
 (0, \epsilon_1 2,0,\epsilon_2 2,\ldots, 0,\epsilon_m 2)
\text{ or } (\epsilon_1 2,0,\epsilon_2 2,0,\ldots, \epsilon_m 2,0), 
\end{equation*}
where $\epsilon_1, \ldots, \epsilon_m \in \{+1, -1\}$ for some positive integer $m$, or (\ref{eq0916}) associated with (\ref{eq0917}).

When we have (\ref{eq0813-1}), we have at least one $l$ and at least one $r$. 
We consider the case when (\ref{eq0813-1}) contains $(l,l)$. 
By Lemma \ref{lem4-2} we have a dot in the first circle component $C_1$. The dot is either in the right arc of the first  bigon or in the other arc of $C_1$. In both cases, we can apply a deformation IVa. Thus, in this case the dotted graph is reducible. 
By a similar argument, we see that for the case when (\ref{eq0813-1}) contains $(r,r)$, the dotted graph is reducible. 
The rest is the case $(l, e, r)$. In this case, since the circle component $C_1$ has a dot, the right arc of the first bigon has a dot; because if the other arc has a dot we can apply a deformation IVa. 
Similarly, the left arc of the third bigon has a dot, and the sequence of dot existence becomes $(b, e, b)$. By Lemma \ref{lem4-2}, we have a dot $p$ in an arc of the outermost component. When $p$ is in $C_1$ (respectively $C_2$), then we can apply a deformation IVa between $p$ and the dot in the left arc (respectively the right arc) of the first bigon. The other cases are also reducible by a similar argument. Thus the case $(l, e, r)$ is also reducible; hence the case of (\ref{eq0813-1}) is reducible. 

When we have (\ref{eq0813-2}), then we have dots in every arc of bigons with the label zero, and the arcs come from all circle components.  By Lemma \ref{lem4-2}, we have a dot $p$ in an arc of the outermost component. Let $C_j$ be the circle component containing $p$. Then we can apply a deformation IVa between $p$ and the dot in the right arc of the $(j-1)$th bigon or the dot in the left arc of the $j$th bigon. Thus the dotted graph is reducible. 

By a similar argument, we see that the case of (\ref{eq0916}) for the straight/ring form is also reducible.  
\end{proof}

\begin{proof}[Proof of Theorem \ref{prop4-5}]
Assume that $\Gamma$ is not reducible. 
We denote by $a_1, \ldots, a_7$ the arcs such that in the sense of regions, 
\begin{eqnarray*}
&& a_1, a_2, a_7 \subset C_2, \ a_3, a_4 \subset C_1, \ a_5, a_6 \subset C_3, \\
&& C_1 \cap C_2=|a_1 a_2 a_3 a_4|, \\
&& C_2 \cap C_3=|a_5 a_6 a_7 a_1|\\
&& C_1 \cap C_2 \cap C_3 =|a_1 a_5 a_4|.
\end{eqnarray*}
Further, we put $a_8=C_2 \backslash  (a_1 \cup a_2 \cup a_7)$; see Figure \ref{Fig8}. 

\begin{figure}[ht]
\centering
\includegraphics*[height=4cm]{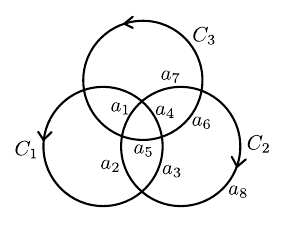}
\caption{The arcs $a_1, \ldots, a_8$ in the proof of Theorem  \ref{prop4-5}; the orientations are when $\epsilon=+1$. }
\label{Fig8}
\end{figure}

If $C_1, C_2, C_3$ have the same label, then we can apply  deformations II to deform $\Gamma$ to an empty graph. 

Hence, two circle components have the label $\epsilon$ and the other circle component have the label $-\epsilon$. $(\epsilon \in \{+1, -1\})$. We assume that $C_1$ and $C_3$ have the label $\epsilon$ and $C_2$ has the label $-\epsilon$. 

By Lemma \ref{lem4-3}, the bigon $C_1 \cap C_2=|a_1 a_2 a_3 a_4|$ has a dot. 
In the following argument, we show that neither $a_2$ nor $a_3$ has dots. 

(Case 1) Assume that $a_2$ has a dot. 
There is a dot in some arc of the circle component $C_1$. 
Since we must have a situation that a deformation IVa is not applicable, we see that the dot is in $a_3$ or $a_4$. 
If $a_3$ has a dot, then, since there is a dot $p$ in some arc of the outermost component of $C_1 \cup C_2$ by Lemma \ref{lem4-2}, we see that $p$ is in $a_7$; otherwise a deformation IVa is applicable. Then, we have dots in each of $a_2, a_3, a_7$. Since there is a dot in some arc of the outermost component of $C_1 \cup C_2 \cup C_3$, we can apply a deformation IV, which is a contradiction; hence $a_4$ has a dot. So the case when $a_3$ has a dot is included in the case when $a_4$ has a dot. 

We have dots in each of $a_2$ and $a_4$. Since a bigon $C_2 \cap C_3$ has a dot, the dot is either in $a_1$ or $a_6$ or $a_7$; if $a_5$ has a dot, we can apply a deformation IVa between $a_4$ and $a_5$. 

(Case a) 
If $a_1$ has a dot, then, since the circle component $C_3$ has a dot, $a_6$ has a dot; if there is a dot $p$ in $a_5$, then we can apply a deformation IVa between $a_5$ and $a_4$, and if there is a dot $p$ in either of the arcs of $C_3$ forming the outermost component of  $C_2 \cup C_3$, then we can apply a deformation IVa between the arc of $p$ and $a_1$. 
So this is included in the case when $a_6$ has a dot.

(Case b)  
If $a_6$ has a dot, then, since we have a dot in the outermost component $C_1 \cup C_2$, $a_7$ also has a dot. Then, we can apply a deformation IVa, because the outermost component of $C_1 \cup C_2 \cup C_3$ has a dot. 

(Case c)
If $a_7$ has a dot, then,  since we have a dot in the outermost component of $C_1 \cup C_3$, $a_3$ or $a_6$ has a dot. In both cases, we can apply a deformation IVa, because the outermost component of $C_1 \cup C_2 \cup C_3$ has a dot. 
Thus, (Case 1) does not occur. 

(Case 2) Assume that $a_3$ has a dot. 
Since the circle component $C_2$ has a dot and $a_2$ does not have a dot by the argument in (Case 1), we see that either $a_1$ or $a_7$ has a dot. If $a_1$ has a dot, since the outermost component of $C_1 \cup C_2$ has a dot, we see that $a_7$ has a dot; the other cases are applicable of a deformation IVa. Hence this case is included in the case when $a_7$ has a dot, and we see that $a_7$ must have a dot. 
We have a dot in each of $a_3$ and $a_7$. 
Since the outermost component of $C_2 \cup C_3$ has a dot, we see that $a_2$ has a dot; the other cases are applicable of a deformation IV. Then, since the outermost component of $C_1 \cup C_2 \cup C_3$ has a dot, we can apply a deformation IVa. 
Hence, (Case 2) does not occur. 

Thus, by the arguments in (Case 1) and (Case 2), neither $a_2$ nor $a_3$ has dots. 
By symmetry of $\Gamma$, we see that neither $a_6$ nor $a_7$ 
has dots. 
This implies that, since we have dots in each of the bigons $C_1 \cap C_2$ and $C_2 \cap C_3$, $a_1$ has a dot: if $a_1$ does not have a dot, then each of $a_4$ and $a_5$ has a dot, and we can apply a deformation IV between the dots in $a_4$ and $a_5$. 
Then, since the outermost component of $C_1 \cup C_2 \cup C_3$ has a dot, $a_8=C_2 \backslash  (a_1 \cup a_2 \cup a_7)$ has a dot. 
Since the circle component $C_1$ has a dot, $a_4$ has a dot. 
Similarly, since the circle component $C_3$ has a dot, $a_5$ has a dot.
Then, we can apply a deformation IVa between the dots in $a_4$ and $a_5$, which is a contradiction. 
Thus, $\Gamma$ is reducible. 
\end{proof}

\section*{Acknowledgements}
The author was partially supported by JST FOREST Program, Grant Number JPMJFR202U.

\end{document}